\documentclass[smallheadings,headsepline,11pt]{amsart}
\usepackage[english]{babel}
\usepackage[all]{xy}
\usepackage{amssymb, amsmath}
\usepackage{txfonts}
\usepackage{enumitem}
\usepackage{calrsfs}
\usepackage{amscd}
\usepackage{graphicx}
\usepackage[applemac]{inputenc}
\usepackage[T1]{fontenc} 
\usepackage{hyperref} \usepackage{amsfonts}
\usepackage{cite}
\usepackage{environ}
\usepackage{amscd}
\usepackage{xcolor}

\usepackage{amsthm}

\pretolerance = 1000


\DeclareMathOperator*{\Gr}{Gr}

\newcommand{\Ba}{\mathrm{B}}
\newcommand{\Lo}{\mathrm{L}}
\newcommand{\Om}{\Omega}
\newcommand{\Ne}{\mathrm{N}}
\newcommand{\Ch}{\mathrm{C}}
\newcommand{\Mo}{\mathrm{M}}
\newcommand{\G}{\mathrm{G}}

\newcommand{\sSet}{\mathbf{sSet}}
\newcommand{\dgco}{\mathbf{dgCoa}^{\mathrm{conil}}}
\newcommand{\dga}{\mathbf{dgA}_{/k}}
\newcommand{\qCat}{\mathbf{qCat}}
\newcommand{\Mon}{\mathbf{Mon}}
\newcommand{\sMon}{\mathbf{sMon}}

\DeclareMathOperator{\Hom}{Hom}

\DeclareMathOperator{\Fun}{Fun}

\DeclareMathOperator{\De}{D}
\DeclareMathOperator{\Dco}{D^{co}}

\input diagxy

\newtheorem{theorem}{Theorem}[section] 
\newtheorem{corollary}[theorem]{Corollary}
\newtheorem{lemma}[theorem]{Lemma}
\newtheorem{proposition}[theorem]{Proposition}
\theoremstyle{remark}
\newtheorem{remark}[theorem]{Remark}
\newtheorem{example}[theorem]{Example}

\begin{document}
\markright{Maurer-Cartan moduli}
\bibliographystyle{../hsiam2}

\setcounter{tocdepth}{2}
\setlength{\parindent} {0pt}
\setlength{\parskip}{1ex plus 0.5ex}

\newcommand{\noproof}{\hfill \ensuremath{\Box}}

\newcommand{\cat}[1]{\mathcal{#1}} 
\newcommand{\ob}{\textrm{ob }}
\newcommand{\mor}{\textrm{mor }}
\newcommand{\id}{\mathbf 1} 
\newcommand{\mods}{\textrm{-Mod}}

\newcommand{\watchit}{\marginpar{$\bigstar$}}

\newcommand{\ground}{k}

\newcommand{\set}[1]{\mathbb{#1}}
\newcommand{\Q}{\mathbb{Q}}
\newcommand{\C}{\mathbb{C}}
\newcommand{\Z}{\mathbb{Z}}
\newcommand{\R}{\mathbb{R}}

\newcommand{\A}{\mathcal{A}}
\newcommand{\B}{\mathcal{B}}
\newcommand{\K}{\mathcal{K}}

\newcommand{\Ga}{\Gamma}
\newcommand{\eps}{\epsilon}
\newcommand{\de}{\delta}
\newcommand{\la}{\lambda}
\newcommand{\al}{\alpha}
\newcommand{\om}{\omega}

\renewcommand{\to}{\rightarrow}
\newcommand{\oo}{\infty}
\newcommand{\di}{\mbox{d}}

\newcommand{\op}{^{\textrm{op}}} 

\newcommand{\comment}[1]{}
\newcommand{\margin}[1]{\marginpar{\footnotesize #1}}

\NewEnviron{killcontents}{}

\title{Homotopy theory of monoids and derived localization}
\author{Joe Chuang, Julian Holstein, Andrey Lazarev} \thanks{This work was partially supported by EPSRC grants EP/N015452/1 and EP/N016505/1}

\date{}
\maketitle
\begin{abstract}
We use derived localization of the bar and nerve constructions to provide simple proofs of a number of results in algebraic topology, both known and new. This includes a recent generalization of Adams's cobar-construction to the non-simply connected case, and a new algebraic model for the homotopy theory of connected topological spaces as an $\infty$-category of discrete monoids.
\end{abstract}
\setcounter{tocdepth}{1}
\tableofcontents

\section{Introduction}

In this paper we examine the consequences of the close relationship between the topological classifying space construction and the algebraic bar construction combined with the techniques of derived localization of differential graded (dg) algebras.

Let $M$ be any discrete monoid with a subset $W$. We consider its monoid algebra $\Ch (M)$ and its derived localization $\Lo_{W}\Ch (M)$; it is a dg algebra obtained from $\Ch (M)$ by inverting the elements in $W$ in a homotopy invariant fashion, \cite{BCL18}. Let $\Ba\Lo_{W}\Ch (M)$ be the bar construction on $\Lo_{W}\Ch(M)$. 
On the other hand, let $\Ne (M)$ be the nerve (classifying space) of $M$ considered as an $\infty$-category and $\Lo_{W}\Ne(M)$ be its localization at $W$, viewed as 1-morphisms in $\Ne(M)$.
Finally let $\Ch \Lo_{W}\Ne(M)$ be the normalized chain coalgebra of the simplicial set $\Lo_{W}\Ne(M)$.
Then we prove that the dg coalgebras $\Ba \Lo_{M}\Ch(M)$ and $\Ch \Lo_{W}\Ne(M)$ are weakly equivalent, i.e.\ there is a zig-zag of filtered quasi-isomorphisms between them.

We can then deduce the following results with minimal computation:
\begin{enumerate}
\item For any reduced grouplike (in particular Kan or 1-reduced) simplicial set $K$ there is an equivalence between $\Ch\G (K)$, the chain algebra of the loop group of $K$, and $\Om\Ch(K)$, the cobar construction on the chain coalgebra of $K$. See Corollary \ref{cor-loopcobar}. This generalizes a classical result of Adams \cite{Adams56}.
\item For an arbitrary reduced simplicial set $K$ there is an equivalence between $\Ch \G (K)$ and a localization of $\Om \Ch (K)$. See Corollary \ref{cor-hesstonks}.
\item The derived category of second kind of the chain coalgebra on a reduced simplicial set $K$ contains the derived category of $\infty$-local systems on $|K|$. If $K$ is grouplike the categories are equivalent. See Corollary \ref{cor-derivedcats}.
\item Two reduced Kan complexes are weakly equivalent if and only if there is a weak equivalence between their integer-valued chain coalgebras. See Corollary \ref{cor-detectwes}.
\end{enumerate}


Some of these, or similar, results have appeared in the literature before: (1) was shown when $K$ is a simplicial singular set of a topological space by Rivera-Zeinalian in \cite{Zeinalia16}, (2) is equivalent to the extended cobar construction of Hess-Tonks \cite{Hess10a}, and (4) is originally due to Rivera-Zeinalian \cite{Rivera18}. However, we believe this paper significantly simplifies the existing proofs and adds conceptual clarity. In particular, we show that the extended cobar-construction of Hess and Tonks \cite{Hess10a} of the chain coalgebra of a simplicial set is a derived localization of the ordinary cobar-construction and clarify its dependence on the choices made. 

The main theorem of this paper is a new result, which provides an entirely algebraic model for the homotopy category of connected spaces. 
By inverting those maps of discrete monoids which induce quasi-isomorphisms of derived localized monoid algebras one obtains an $\infty$-category of discrete monoids. More precisely, this $\infty$-category is realized as a \emph{relative category} in the sense of Barwick and Kan \cite{Barwick12}.
We prove in Theorem \ref{thm-main} that this $\infty$-category of discrete monoids is equivalent to the $\infty$-category of reduced simplicial sets (also viewed as a relative category with ordinary weak equivalences of simplicial sets). This is potentially of great computational utility since derived localizations of associative rings are effectively computable in a number of situations, both of algebraic and topological origin cf. \cite{BCL18}.

As far as we know, this is the first result providing an algebraization of the homotopy category of spaces without any restrictions apart from connectivity (such as simple connectivity, rationality or being of finite type). It is ideologically similar to the well-known result of Thomason \cite{Tho80} constructing a closed model category structure on small categories that also models the $\infty$-category of spaces as well as its refinement due to Raptis \cite{Rap10}. However Thomason's and Raptis's constructions (while providing more structured equivalences of closed model categories) cannot be viewed as genuine algebraization results since weak equivalences of small categories are defined by appealing to the category of spaces.

\subsection{Notation}
We work over a commutative ground ring $k$ that is a principal ideal domain.   All tensor products are understood over $k$.

We denote the category of simplicial sets by $\mathbf{sSet}$ and its subcategory of reduced simplicial sets, i.e.\ simplicial sets with exactly on 0-simplex, by $\mathbf{sSet}^{0}$. 
We write $\mathbf{qCat}$ for the category of simplicial sets with the Joyal model structure as a model for $\infty$-categories; the subcategory of simplicial sets with one object is denoted by $\mathbf{qCat}^{0}$. 
To distinguish the classical weak equivalences in $\mathbf{sSet}$ and the categorical equivalences in $\mathbf{qCat}$ we will denote them by $\simeq_{Q}$ (for Quillen) and $\simeq_{J}$ (for Joyal) respectively. The geometric realization of a simplicial set $K$ will be denoted by $|K|$.

We denote the category of monoids by $\Mon$ and that of simplicial monoids by $\sMon$. 

The category of unital dg algebras, free as $k$-modules, is denoted by $\mathbf{dgA}$ and the category of augmented dg-algebras by $\mathbf{dgA}_{/k}$.
We denote by $\dgco$ the dg category of counital conilpotent dg coalgebras, also free as $k$-modules. 
By weak equivalences of dg coalgebras we always mean morphisms in the class generated by filtered quasi-isomorphism, the definition is recalled in Section \ref{sect-barcobar}. All our gradings are homological.

We will denote by $\Ch$ the normalized chain coalgebra functor with coefficients in $k$ on $\sSet$, cf.\ Chapter 10 of \cite{Neisendorfer10}. We also denote by $\Ch$ the functor that sends any monoid to its monoid algebra over $k$, it will be viewed as an object of $\mathbf{dgA}$. 

\subsection{Acknowledgements}
The authors would like to thank Michael Batanin, Jonathan Block, Denis Cisinski, Kathryn Hess and Andy Tonks for useful input.

\section{Background}

\subsection{The bar cobar adjunction}\label{sect-barcobar}
We recall that over any commutative ring the bar and cobar construction provide an adjunction $\Omega: \dgco \leftrightarrows \mathbf{dgA}_{/k}: \Ba$. See for example \cite{Husemoller74}.

For the reader's convenience we repeat some definitions.
For an augmented dg algebra $\epsilon: A \to k$, set $A_{+}=\ker(\epsilon)$. Then define $\Ba(A) = \oplus_{n=0}^{\infty}  (sA_{+})^{\otimes n}$ with comultiplication defined by deconcatenation and counit given by the projection to $(sA_{+})^{\otimes 0}\cong k$, where $s$ denotes the suspension. 
We define the differential on $\Ba(A)$ to be the unique coderivation whose projection $\Ba(A) \to sA_+$ restricts to $d_{sA}$ on $sA_+$, to
$s \mu_A (s^{-1}\otimes s^{-1})$ on $sA_+\otimes sA_+ $ and to $0$ on higher tensors.
The cobar construction of a coalgebra is defined analogously.

Now assume that $k$ is a field.
Then the bar-cobar adjunction is a Quillen equivalence \cite{Positselski11}. We consider the usual model structure on augmented dg algebras (so that weak equivalences are multiplicative quasi-isomorphisms).
For the model structure on $\dgco$ see  \cite[Theorem 9.3(b)]{Positselski11}. The key definition is that $f: C \to D$ is a filtered quasi-isomorphism if there are admissible filtrations on $C$ and $D$ such that the associated graded map $\Gr(f)$ is a graded quasi-isomorphism. A filtration $F$ on a conilpotent coalgebra $C$ is admissible if it is increasing, compatible with comultiplication and differential, and $F^{0}$ equals the image of the coaugmentation $k \to C$. An admissible filtration always exists. Then $f: C \to D$ is a weak equivalence in $\dgco$ if it is contained in the smallest class of morphisms containing filtered quasi-isomorphisms and closed under the 2-out-of-3 property. If $k$ is not a field we will, somewhat abusing terminology, still refer to filtered quasi-isomorphisms as weak equivalences, even though there may not be an underlying closed model category.
Cofibrations in $\dgco$ are just monomorphisms.

\subsection{Localization of dg algebras}\label{sect-alglocal}
Given a dg algebra $A$ with a collection of cycles $S$, its derived localization $\Lo_{S}A$ is the homotopy initial dg algebra under $A$ such that the images of all $s \in S$ are invertible in homology, \cite[Definition 3.3]{BCL18}.
By  \cite[Theorem 3.10]{BCL18}, $\Lo_{S}(A)$ is a homotopy pushout of the form $A *^{h}_{k\langle S \rangle} k\langle S, S^{-1}\rangle$. 

\subsection{Localization of $\infty$-categories}\label{sect-catlocal}

We will use Joyal's theory of $\infty$-categories as quasi-categories, see \cite{Joyal?, Lurie11a} for further background. Given any simplicial set $K$ with a subsimplicial set $W$ we may consider it as an object of $\qCat$ and define its localization $\Lo_{W}K$, see \cite[Proposition 7.1.3]{Cisinski?}. It has the universal property that for any quasi-category $C$ the functor category $\Fun(\Lo_{W}K, C)$ is equivalent to the subcategory of $\Fun(K, C)$ consisting of functors sending any map in $W$ to an invertible map in $C$. See also the section on homotopy localization in \cite{Joyal?}.

We restrict attention to reduced simplicial sets. We are particularly interested in the case where $W$ is given by a collection of 1-simplices $S$ and will write $\Lo_{S}K$ in this case.
Let $I$ be the nerve of $\set N$, the free monoid on one generator, and $J$ the nerve of $\set Z$, the free group on one generator.
There are natural maps $I \to J$ and $\amalg_{S}I \to K$, and $\Lo_{S}K$ is equivalent to the homotopy pushout in $\qCat^{0}$ of $\amalg_{S}J \leftarrow \amalg_{S}I \to K$. This follows from the proof of  \cite[Proposition 7.1.3]{Cisinski?}: The map $\amalg_{S}I \to \amalg_{S}J$ is an anodyne extension, i.e.\ a trivial cofibration in the Quillen model structure, thus it may play the role of $W \to W'$ and the rest of the proof applies without changes.

\subsection{Grouplike simplicial sets}\label{sect-grouplike}
Any simplicial set $K$ may be interpreted as an object in $\qCat$ and its fundamental category $\pi(K)$ is defined as the left adjoint of the nerve functor from categories to simplicial sets.
If $K$ is weakly Kan, there is an explicit construction of $\pi(K)$ as the category with objects given by 0-simplices and morphisms given by 1-simplices modulo 2-simplices, see \cite[Section 1.2.3]{Lurie11a}.

We say $K$ is grouplike if $\pi(K)$ is a groupoid.
In particular all Kan complexes are grouplike. A converse is true for weak Kan complexes:
if $K$ is a weak Kan complex and grouplike then it is a Kan complex, see \cite[Proposition 1.2.5.1]{Lurie11a}.
The property of being grouplike is invariant under categorical equivalence, thus the Joyal fibrant replacement of a grouplike simplicial set is a Kan complex.

\subsection{Relative categories}
We will also use the theory of relative categories as introduced in \cite{Barwick12} as a model for $\infty$-categories. A relative category $(\cat C, W)$ is just a pair consisting of a category $\cat C$ and a class of weak equivalences $W \subset \mathrm{Mor}(\cat C)$.

Associated to any relative category $(\cat C, W)$ is a simplicial category $\Lo_{W}\cat C$ obtained by simplicial localization of $\cat C$ (viewed as a simplicial category) at $W$.  There is a model structure on relative categories whose weak equivalences $(\cat C, W)\to (\cat C^{\prime}, W^{\prime})$ are those maps that induce weak equivalences of simplicial localizations 
$\Lo_{W}\cat C\to \Lo_{W^{\prime}}\cat C^{\prime}$, cf. \cite{BaK12}.

The model category of relative categories is Quillen equivalent to the model categories of simplicial categories and quasi-categories. In particular the relative category $(\sSet, W_{Q})$, where $W_{Q}$ denotes weak homotopy equivalences, is a model for the $\infty$-category of spaces.

We are not aware of a good exposition of homotopy limits and colimits in relative categories. To avoid technicalities we define the homotopy limit of a diagram in a relative category by taking the $\infty$-categorical limit of the corresponding diagram in the associated $\infty$-category. A comparison result ensures that if the relative category happens to be a model category then this recovers the usual homotopy limits and homotopy colimits. This is explained in Remark 7.9.10 of \cite{Cisinski?} or Remark 2.5.8 in \cite{Barnea17}.
In particular it follows from this that any weak equivalence of relative categories preserves  homotopy limits, which we will need  below.

\section{Bar and nerve construction}
We begin by considering the following diagram.

\[
\begin{CD}
\Mon     @>\Ne >>  \qCat^{0}\\
@VV\Ch V        @VV\Ch V\\
\dga     @>\Ba >>  \dgco
\end{CD}
\]
Here $\Ne$ is the usual nerve of a monoid, considered as a reduced simplicial set.
The vertical arrows are given, respectively, by the monoid algebra and the normalized chain coalgebra, over $k$. For any monoid $M$ the augmentation $\epsilon$ on $\Ch (M)$ is induced by $M \to *$.
Finally, $\Ba$ is the bar construction on an augmented dg algebra as recalled in Section \ref{sect-barcobar}.

It is a straightforward but fundamental observation that this diagram commutes:
\begin{lemma}\label{lemma-discretecommute}
For any monoid $M$ there is a natural isomorphism of dg coalgebras $\Ch \Ne M \cong \Ba \Ch M$.
\end{lemma}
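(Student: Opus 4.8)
The plan is to exhibit an explicit natural isomorphism and verify that it respects the counital coproduct, the differential, and monoid maps. Write $A=\Ch M = k[M]$ with augmentation $\epsilon\colon k[M]\to k$, $\epsilon(m)=1$, so that $A_+=\ker\epsilon$ is free over $k$ on $\{\,m-e : m\in M,\ m\neq e\,\}$, where $e$ is the unit of $M$. On the other side, the non-degenerate $n$-simplices of $\Ne M$ are exactly the tuples $(m_1,\dots,m_n)$ with all $m_i\neq e$, since the degeneracies insert copies of $e$; hence $\Ch(\Ne M)_n$ is free on $(M\setminus\{e\})^n$. I would define $\phi\colon \Ba A\to\Ch\Ne M$ as the tensor powers of the degree-one isomorphism $sA_+\xrightarrow{\ \sim\ }\Ch(\Ne M)_1$ induced by the projection $k[M]\twoheadrightarrow k[M]/k\!\cdot\![e]=\Ch(\Ne M)_1$, that is $s(m-e)\mapsto[m]$, using the evident identification $\Ch(\Ne M)_n\cong\Ch(\Ne M)_1^{\otimes n}$ matching $(m_1,\dots,m_n)$ with $[m_1]\otimes\dots\otimes[m_n]$. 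This is visibly a natural isomorphism of graded $k$-modules; naturality in $M$ is immediate, since a monoid map $f\colon M\to N$ acts by $m-e\mapsto f(m)-e$ on $A_+$ and by $[m]\mapsto[f(m)]$ on chains, both sending $m$ to $0$ when $f(m)=e$.

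Next I would check that $\phi$ is a morphism of counital coalgebras, which amounts to identifying the Alexander--Whitney diagonal on $\Ch\Ne M$ with deconcatenation. For an $n$-simplex $\sigma$ the Alexander--Whitney diagonal is the sum over $p+q=n$ of (front $p$-face)$\otimes$(back $q$-face); for the nerve the front $p$-face of $(m_1,\dots,m_n)$ is $(m_1,\dots,m_p)$ and the back $q$-face is $(m_{p+1},\dots,m_n)$, and each is again a tuple of non-identity elements, so no collapse under normalization occurs. Thus the diagonal is exactly deconcatenation, which is the comultiplication of $\Ba A$, while the two counits are both projection onto the degree-zero summand $k$. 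Under the tensor-power description of $\phi$ this step is formal.

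The substance is compatibility with differentials. The normalized chain differential is $\partial=\sum_{i=0}^n(-1)^i d_i$, where for the nerve the inner faces $d_i$ with $0<i<n$ multiply adjacent entries while the outer faces $d_0$ and $d_n$ delete the first and last entry. By contrast the bar differential is a coderivation whose only nonzero component is the quadratic term built from $s\mu_A(s^{-1}\otimes s^{-1})$, acting on adjacent pairs. The reconciliation rests on the identity
\[
 (m_i-e)(m_{i+1}-e)=(m_im_{i+1}-e)-(m_i-e)-(m_{i+1}-e)
\]
in $A_+$: expanding one bar term in the basis $\{m-e\}$ yields the ``merge'' term $s(m_im_{i+1}-e)$, matching the inner face $d_i$, together with two ``drop'' terms deleting $m_i$, respectively $m_{i+1}$. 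Summing over $i=1,\dots,n-1$ the drop terms telescope and leave precisely the outer faces $d_0$ and $d_n$, and the Koszul sign produced by $s^{-1}\otimes s^{-1}$ makes all signs agree with those of $\partial$. The degenerate case $m_im_{i+1}=e$ is automatically consistent, as then $m_im_{i+1}-e=0$ and the corresponding chain simplex is degenerate, hence zero. This telescoping-and-sign bookkeeping is the one genuinely delicate point and the main obstacle; every other step is formal, so once it is settled the lemma follows.
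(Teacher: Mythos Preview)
Your proof is correct and follows exactly the same approach as the paper: the same isomorphism $m\leftrightarrow s(m-e)$ on generators, extended tensor-multiplicatively. The paper merely asserts that ``a quick computation shows that the differentials also match, as do the coalgebra structures''; your telescoping argument for the differential and your identification of the Alexander--Whitney diagonal with deconcatenation are precisely that computation written out.
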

\begin{proof}
With the augmentation $\epsilon: \Ch(M) \to k$ given as above we write $\underline M = \ker \epsilon$ and $\underline m$ for $m-1$ in $\Ch M$. Write $\overline M$ for $M\setminus \{1\}$. Then the set of elements $\underline m$ for $m \in \overline{M}$ gives a basis for $\underline M$. The map $m \mapsto s \underline m$ induces an isomorphism from $\Ch_{n}\Ne M = k[\overline M^{\times n}]$ to $\Ba \Ch M_{n} = (s\underline M)^{\otimes n}$.
A quick computation shows that the differentials also match, as do the coalgebra structures.
\end{proof}

We will refine this result by considering localizations of dg algebras and simplicial sets.

\begin{lemma}
There is a natural model structure on $\mathbf{qCat}^{0}$ such that weak equivalences are categorical equivalences and cofibrations are monomorphisms. 
\end{lemma}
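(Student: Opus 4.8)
The plan is to construct this model structure by transferring an existing one along a suitable adjunction, rather than verifying the model category axioms by hand. The key observation is that reduced simplicial sets (one $0$-simplex) are exactly the simplicial sets with a single object, so $\mathbf{qCat}^{0}$ sits inside $\mathbf{qCat}$ as a full subcategory, and I would like the model structure to be inherited from the Joyal model structure on $\mathbf{qCat}$. First I would identify $\mathbf{qCat}^{0}$ with the category of coalgebras over the comonad, or equivalently as a slice/coslice-type construction: a reduced simplicial set is the same as a simplicial set $K$ equipped with a map from the terminal object $\Delta^{0}$ hitting the unique vertex, so one is really working in the coslice category $\Delta^{0}/\mathbf{sSet}$ cut out by the reducedness condition. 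The cleanest route is to realize $\mathbf{qCat}^{0}$ as a left Bousfield-type localization or as the category of algebras obtained by a right transfer.

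Concretely, I would apply the standard \emph{right-induced} (or transferred) model structure along the adjunction whose right adjoint is the inclusion $\mathbf{qCat}^{0} \hookrightarrow \mathbf{qCat}$ and whose left adjoint collapses the $0$-skeleton, or alternatively use the fact that reduced simplicial sets carry a model structure Quillen equivalent to pointed connected spaces (this is classical for the Kan--Quillen structure, due to the reduced model structure on $\mathbf{sSet}^{0}$). The plan's main steps are: (i) define weak equivalences in $\mathbf{qCat}^{0}$ to be the Joyal equivalences between reduced simplicial sets and cofibrations to be monomorphisms; (ii) define fibrations by the right lifting property against trivial cofibrations; (iii) verify the two-out-of-three and retract axioms, which are immediate from the corresponding properties in $\mathbf{qCat}$ together with the fact that monomorphisms are closed under retracts; and (iv) establish the two factorization axioms and the remaining lifting axiom, which is where the genuine work lies. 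For the factorizations I would invoke the small object argument: since $\mathbf{qCat}^{0}$ is a presentable category and the generating (trivial) cofibrations can be taken to be maps between reduced simplicial sets, cofibrant generation transfers provided one checks that the generators stay inside the subcategory.

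The main obstacle I anticipate is verifying that the class of trivial cofibrations is generated by a set and that the small object argument stays within reduced simplicial sets. The subtlety is that in $\mathbf{sSet}$ the generating anodyne/Joyal-trivial cofibrations $\Lambda^{n}_{k}\hookrightarrow \Delta^{n}$ are \emph{not} reduced, so one cannot simply restrict the Joyal generators; instead one must either quotient these generators down to their reduced versions or describe the reduced generating trivial cofibrations explicitly, and then confirm that the resulting fibrant objects are precisely the reduced quasi-categories. This is exactly the point where the reduced setting differs from the ambient one, and it is analogous to the work needed to produce the reduced Kan--Quillen model structure from the ordinary one. I expect the cleanest way to discharge this is to cite the general machinery for model structures on reduced presheaf categories (for instance the reduced analogue in the style of \cite{Lurie11a} or Cisinski's theory of model structures on presheaf categories \cite{Cisinski?}), under which $\mathbf{qCat}^{0}$ inherits a cofibrantly generated model structure with cofibrations the monomorphisms and weak equivalences the Joyal equivalences, rather than to reprove the small object argument by hand.
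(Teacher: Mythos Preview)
Your proposal correctly identifies the target and the main difficulty, but it stops short of an actual argument: you name the obstacle (the Joyal generating trivial cofibrations are not reduced) and then defer its resolution to unspecified ``general machinery'' or to quotienting the generators, without carrying either out. As written this is a plan, not a proof. There is also some conceptual drift: you speak of right-transferring along the inclusion $\mathbf{qCat}^{0}\hookrightarrow\mathbf{qCat}$, but right transfer creates fibrations and weak equivalences via the right adjoint, whereas you want to \emph{prescribe} the cofibrations as monomorphisms; these two desiderata need to be reconciled, and doing so is precisely the work you are postponing.

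The paper takes a different and shorter route that sidesteps the need for explicit generating trivial cofibrations. It uses the adjunction $\mathfrak{C}\dashv\mathcal{N}$ between $\mathbf{qCat}$ and simplicial categories, restricted to $\mathbf{qCat}^{0}\leftrightarrows\mathbf{sMon}$, and then invokes Lurie's criterion (as in the proof of \cite[Theorem~2.2.5.1]{Lurie11a}) for producing a combinatorial model structure from a \emph{perfect} class of weak equivalences. This reduces the problem to three checks: (1) categorical equivalences in $\mathbf{qCat}^{0}$ form a perfect class (immediate from the $\mathbf{sMon}$ side since $\mathfrak{C}$ preserves filtered colimits); (2) they are stable under pushout along monomorphisms (inherited from $\mathbf{qCat}$); and (3) any map with the right lifting property against all monomorphisms in $\mathbf{qCat}^{0}$ is a categorical equivalence. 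Step~(3) is dispatched by a neat trick you do not have: given an arbitrary cofibration $A\to B$ in $\mathbf{qCat}$, any square into reduced targets factors through the reductions $\bar A=A/A_{0}\to\bar B=B/B_{0}$, so the reduced RLP already implies the full RLP, and one falls back on the non-reduced case. This avoids entirely the question of what the reduced generating trivial cofibrations look like, which is exactly the point where your sketch stalls.
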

\begin{proof}
We recall the Quillen equivalence  $\mathfrak C \dashv \cat N: \qCat \leftrightarrows \mathbf{sCat}$, see e.g.\ \cite{Lurie11a} and observe that it restricts to an adjunction $\mathbf{sMon} \leftrightarrows \qCat^{0}$.
Then the proof of the lemma is the same as for the non-reduced case, cf. \cite[Theorem 2.2.5.1]{Lurie11a}.
We need to check three conditions: 
\begin{enumerate}
\item 
The class of categorical equivalences in $\qCat^{0}$ is perfect in the sense of  \cite[Definition A.2.6.10]{Lurie11a}. Namely, it contains isomorphisms, is closed under 2-out-of-3, is stable under filtered colimits and is generated under filtered colimits by a small subset.
As the class of weak equivalences in $\sMon$ are perfect it suffices to check that $\mathfrak C$ preserves filtered colimits by \cite[Corollary A2.6.12]{Lurie11a}. 
But $\mathfrak C: \qCat^{0} \to \mathbf{sMon}$ commutes with colimits. 
\item Categorical equivalences are stable under pushout by cofibrations. Cofibrations in $\qCat^{0}$ are also cofibrations in $\qCat$, so this follows from the non-reduced case (or directly by the same argument).
\item Finally we need to check that a map $f: K \to L$ of reduced simplicial sets which has the right lifting property with respect to all cofibrations is a categorical equivalence.
It suffices to show that if $f$ has the right lifting property with respect to all cofibrations between reduced simplicial sets then it has the right lifting property with respect to all cofibrations; this reduces the problem to the non-reduced case.
So let $A \to B$ be a cofibration. But any maps $A \to K$ and $B \to L$ factor through the reduced simplicial sets $\bar A = A/A_{0}$ and $\bar B = B/B_{0}$, and $\bar A \to \bar B$ is a cofibration. 
Thus the right lifting property with respect to $\bar A \to \bar B$ provides a right lift with respect to $A \to B$. \qedhere
\end{enumerate}
\end{proof}

The following lemma is essentially \cite[Proposition 7.3]{Zeinalia16}. We provide a direct proof.
\begin{lemma}\label{lem-chains-equivalence}
Let $k$ be a field. The chain coalgebra functor $\Ch:\mathbf{qCat}^{0}\to \dgco$ preserves weak equivalences.
\end{lemma}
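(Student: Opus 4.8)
The plan is to detect weak equivalences in $\dgco$ through the cobar functor. Since $k$ is a field, $\Omega\colon \dgco \leftrightarrows \dga \colon \Ba$ is a Quillen equivalence and cofibrations in $\dgco$ are monomorphisms, so every dg coalgebra is cofibrant. A left Quillen functor belonging to a Quillen equivalence reflects weak equivalences between cofibrant objects; hence a morphism $g$ of $\dgco$ is a weak equivalence as soon as $\Omega g$ is a quasi-isomorphism of dg algebras. It therefore suffices to prove that $\Omega\Ch(f)$ is a quasi-isomorphism whenever $f\colon K \to L$ is a categorical equivalence of reduced simplicial sets. I would emphasise at the outset that it is genuinely necessary to use the categorical (Joyal) structure, and not merely the underlying homotopy type: the map $I \to J$ induced by $\set{N}\hookrightarrow\set{Z}$ is a weak homotopy equivalence, yet by Lemma \ref{lemma-discretecommute} we have $\Ch(I)\cong\Ba k[\set{N}]$ and $\Ch(J)\cong\Ba k[\set{Z}]$, so $\Omega\Ch$ of $I\to J$ is, up to the bar--cobar counit, the inclusion $k[\set{N}]\to k[\set{Z}]$, which is not a quasi-isomorphism.

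The core of the argument is then to compute $\Omega\Ch$ up to quasi-isomorphism in homotopy-invariant terms. I would construct a natural quasi-isomorphism of dg algebras between $\Omega\Ch(K)$ and the normalized chain algebra $\Ch\mathfrak{C}(K)$ of the rigidification of $K$, where $\mathfrak{C}\dashv\cat{N}\colon \mathbf{sMon}\leftrightarrows\qCat^{0}$ is the adjunction already used above and $\mathfrak{C}(K)$ is regarded as a simplicial monoid (equivalently one could use the Kan loop group $\G K$). The comparison is built on the necklace description of $\mathfrak{C}(K)$, whose combinatorics match the iterated-coproduct/bar combinatorics of the cobar differential; on a discrete monoid $M$ it specialises, after applying $\Omega$, to the isomorphism of Lemma \ref{lemma-discretecommute}. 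Since $\Omega\Ch$ preserves colimits, the construction of such a natural transformation can be reduced to the simplices of $K$.

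With the comparison in hand the conclusion is immediate. A categorical equivalence $f$ is by definition sent by $\mathfrak{C}$ to a Dwyer--Kan equivalence of simplicial monoids, which is in particular a weak homotopy equivalence of underlying simplicial sets; applying normalized chains therefore yields a quasi-isomorphism $\Ch\mathfrak{C}(f)$. Transporting along the natural quasi-isomorphism of the previous paragraph and using the two-out-of-three property shows that $\Omega\Ch(f)$ is a quasi-isomorphism, and by the first paragraph $\Ch(f)$ is then a weak equivalence in $\dgco$. The main obstacle is the comparison of the second paragraph: it is essentially a form of the generalized (non-simply-connected) Adams cobar theorem, and—since the corresponding statement appears only as a consequence of the main results later in the paper—it must be established directly here, through the rigidification/necklace model or the loop group, rather than quoted, on pain of circularity. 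The delicacy is exactly the one exhibited by the $I\to J$ example: the naive Eilenberg--Moore comparison degenerates in the non-simply-connected case, so the proof of the comparison cannot rest on a spectral-sequence collapse and must exploit the full conilpotent cobar construction.
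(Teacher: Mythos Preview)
Your strategy is sound but takes a genuinely different route from the paper, and it defers essentially all the content to a result at least as hard as the lemma itself. The paper argues directly in $\dgco$ via three elementary claims: (i) $\Ch$ sends homotopies in $\qCat^{0}$ (built from the smash cylinder $K\wedge\mathbb{I}_{+}$) to homotopies of dg coalgebras, so categorical equivalences between weak Kan complexes go to weak equivalences; (ii) a pushout along inner horn inclusions induces an explicit filtered quasi-isomorphism on chains, constructed by hand; (iii) every reduced simplicial set maps to a weak Kan complex by a transfinite composite of such pushouts (an inner-horn $\operatorname{Gx}^{\infty}$). This is self-contained and avoids any cobar comparison. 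Your route---reflect through $\Omega$ and invoke a natural quasi-isomorphism $\Omega\Ch(K)\simeq\Ch\mathfrak{C}(K)$---is precisely the Rivera--Zeinalian approach the paper cites as the existing proof of this lemma; it is conceptually clean once the necklace comparison is in hand, but that comparison is substantial and your proposal only gestures at it. What the paper's approach buys is independence from that machinery; what yours buys is a direct link to the rigidification, at the cost of reproving Rivera--Zeinalian's theorem.

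One genuine slip: your parenthetical ``equivalently one could use the Kan loop group $\G K$'' is wrong, and your own $I\to J$ example refutes it. Since $|I|\simeq S^{1}$ one has $\Ch\G(I)\simeq k[\set{Z}]$, while $\Omega\Ch(I)\simeq k[\set{N}]$; these are not quasi-isomorphic. The comparison $\Omega\Ch\simeq\Ch\G$ holds only for \emph{grouplike} $K$ (this is exactly Corollary~\ref{cor-loopcobar}, proved later using the present lemma), so $\G$ cannot stand in for $\mathfrak{C}$ here. The rigidification $\mathfrak{C}$ is the right object precisely because it is a Joyal, not Quillen, invariant---which is the distinction you yourself flagged.
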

\begin{proof}
We reduce this lemma to three claims. 
\begin{enumerate}
\item $\Ch$ sends categorical equivalences between weak Kan complexes to weak equivalences.
\item $\Ch$ sends pushouts along disjoint unions of inner horn inclusions to trivial cofibrations.
\item There is a functor $\operatorname{Gx}^{\infty}$ sending each reduced simplicial set $A$ to a reduced weak Kan complex. For each reduced simplicial set $A$ there is a natural map $A \to \operatorname{Gx}^{\infty}A$ which is a colimit of pushouts along disjoint unions of inner horn inclusions.
\end{enumerate}

If we have these claims we may take any categorical equivalence $A \to B$ and using (3) replace it by a zig-zag $A \to \operatorname{Gx}^{\infty}A \to \operatorname{Gx}^{\infty}B \leftarrow B$. $\Ch$ sends the middle map to a weak equivalence by (1). The outer maps are sent to direct limits of trivial cofibrations, thus they are trivial cofibrations themselves, and $\Ch (A) \simeq \Ch (B)$.

To prove (1) it suffices to show that homotopy equivalences in $\qCat^{0}$ are sent to filtered quasi-isomorphisms. 
In fact we will show that homotopies of maps in $\qCat^{0}$ are sent to homotopies between maps of dg coalgebras.
 
Let $\mathbb I$ be a Kan complex such that the functor $X\mapsto X\times \mathbb I$ gives good cylinder objects in $\qCat$. For example, we can take for $\mathbb I$ the nerve of the category with two objects and two mutually inverse morphisms between them. We denote by ${\mathbb I}_{+}$ the simplicial set obtained by adding a disjoint base point. 

Then a cylinder object in $\qCat^{0}$ is given by the smash product $K \wedge {\mathbb I}_{+}$, i.e.\ $K\times {\mathbb I}_{+}/K \vee {\mathbb I}_{+}$. 
Thus any homotopy between two maps from $K$ to $K'$ in $\qCat^{0}$ may be represented by a map $F: K\wedge {\mathbb I}_{+} \to K'$.
This gives a map of coalgebras  $\Ch(F): \Ch(K\wedge \mathbb {\mathbb I}_{+}) \to \Ch K'$ and it suffices to show that $\Ch(K\wedge {\mathbb I}_{+})$ is a cylinder object in $\dgco_{k/}$.
For any coaugmented coalgebra $(C,w)$ we write $\tilde C$ for $C/w(k)$.
Then $\Ch(K)\amalg \Ch(K) \cong k \oplus \tilde \Ch(K) \oplus \tilde \Ch (K)$ injects into $\Ch(K\wedge {\mathbb I}_{+}) = k \oplus \tilde \Ch(K)\otimes \Ch(\mathbb I)$, thus it is a cofibration of dg coalgebras. 
It remains to show that $\Ch$ sends the projection to a filtered quasi-isomorphism.
Let $F^{0}(\Ch(K\wedge {\mathbb I}_{+})) = w(k)$ and $F^{i}(\Ch(K\wedge I_{+})) = F^{i}\tilde \Ch(K) \otimes \Ch(\mathbb I)$.
This is an admissible filtration and on graded pieces we have quasi-isomorphisms $\Gr^{i}\Ch(K)\otimes \Ch({\mathbb I}) \simeq \Gr^{i} \Ch(K)$.

To establish (2) we consider a simplicial set $K$ and let $K'$ be defined by attaching a collection of $n$-simplices $B_{i}$ along inner horns.
We need to show $\Ch(f): \Ch (K) \to \Ch (K')$ is a filtered quasi-isomorphism.
Filter $\Ch (K)$ by $F_{i}\Ch (K) = \oplus_{j \leq i} \Ch (K)_j$.
This is clearly an admissible filtration.
To define the filtration on $\Ch (K')$ we denote the face of $B_i$ that is not in $K$ by $b_{i}$. I.e.\ the $b_{i}$ are the  $(n-1)$-simplices which are in $K'$ but not in $K$.

We let 
$F'_i\Ch (K') = F_i\Ch (K)$ for $i <n$ and  $F'_i\Ch (K') = F_i\Ch (K) \oplus k.{B_i} \oplus k. {b}_i$ for $i \geq n$.
Thus every $n$-simplex appears in the the $n$-th graded piece of $K'$, with the exception of the $b_i$, which are in the $n$-th piece despite being $(n-1)$-simplices.

This is clearly compatible with differentials, we need to check the comultiplication. We check this on a basis.
By definition $\Delta B_i = \sum_k \partial_0^k B_i \otimes \partial_{max}^{n-k} B_i$.
Applying $\partial_0$ or $\partial_{max}$ $k$ times to $B_i$ gives a $n-k$ simplex which lives in $F'_{n-k}$ unless one of those terms is of the form $b_j$. 
For degree reasons this could only be $\partial_0 B_i$ and $\partial_{max} B_i$, but as we attached along inner horns both of these are in $K$, and thus in $F'_{n-1} \Ch (K')$.

Thus $F'$ gives an admissible filtration on $\Ch (K')$ which is clearly compatible with $\Ch(f)$.

$\Gr^{F}_{i} \Ch(K) \to \Gr^{F'}_{i}\Ch(K')$ is an isomorphism everywhere except for degree $n$. In  degree $n$ the cokernel has a basis give by all $B_i$ and $b_i$, and $dB_i = b_i \mod K$, so the cokernel is acyclic.

Thus $\Ch (K)$ and $\Ch (K')$ are filtered quasi-isomorphic. Since $\Ch (f)$ is a monomorphism it is a trivial cofibration. In this argument we fixed $n$ for ease of notation but the same argument goes through if we are attaching $n$-simplices for different values of $n$ simultaneously.

Claim (3) follows directly from the discussion after Definition 3.2.10 in \cite{Waldhausen13}.
The only change is that one defines $\operatorname{Gx}$ by filling all inner horns, rather than filling all horns.
\end{proof}

\begin{lemma}\label{lemma-cleftquillen}
Let $k$ be a field. Then the functor of the normalized chain coalgebra $\Ch: \mathbf{qCat}^{0} \to \dgco$ is left Quillen.
\end{lemma}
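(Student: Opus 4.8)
The plan is to verify the three defining properties of a left Quillen functor: that $\Ch$ is a left adjoint, that it preserves cofibrations, and that it preserves trivial cofibrations. The last two are immediate from what we already have. In both $\qCat^{0}$ and $\dgco$ the cofibrations are exactly the monomorphisms, and a monomorphism of reduced simplicial sets clearly induces an injection on normalized chains, so $\Ch$ preserves cofibrations. A trivial cofibration is a monomorphism that is simultaneously a weak equivalence; since $\Ch$ sends monomorphisms to monomorphisms and, by Lemma \ref{lem-chains-equivalence}, weak equivalences to weak equivalences, it preserves trivial cofibrations as well.

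The substance of the proof is therefore to show that $\Ch \colon \qCat^{0} \to \dgco$ is a left adjoint. Both categories are locally presentable (for $\dgco$ one may take the finite-dimensional conilpotent coalgebras as generators), so by the adjoint functor theorem for presentable categories it suffices to show that $\Ch$ preserves all small colimits. I would detect colimits in $\dgco$ through the coaugmentation coideal functor $C \mapsto \widetilde{C} = \ker(\epsilon_{C})$. This functor from $\dgco$ to dg $k$-modules is conservative, and it is a left adjoint, its right adjoint being the cofree conilpotent coalgebra $T^{c}(V) = \bigoplus_{n \geq 0} V^{\otimes n}$; hence it preserves colimits. Being conservative, it detects isomorphisms, so to prove that the canonical comparison $\colim \Ch(K_{i}) \to \Ch(\colim K_{i})$ is an isomorphism it is enough to check this after applying $\widetilde{(-)}$, i.e.\ to show that the composite $K \mapsto \widetilde{\Ch(K)}$, which is just the reduced normalized chains, preserves colimits.

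For that last point I would combine two observations. First, the full subcategory $\qCat^{0} \subset \sSet_{*}$ of reduced simplicial sets is closed under colimits in pointed simplicial sets: every object has a single vertex, necessarily the basepoint, so any colimit again has a single vertex. Thus colimits of reduced simplicial sets may be computed in $\sSet_{*}$. Second, reduced normalized chains $\sSet_{*} \to (\text{dg } k\text{-modules})$ is a left adjoint, being the reduced free $k$-module functor followed by the Dold--Kan equivalence, and so preserves colimits. Putting these together shows that $\widetilde{\Ch}$, and therefore $\Ch$ itself, preserves colimits, which completes the argument.

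The main obstacle is the colimit computation in the second paragraph. Colimits in $\dgco$ are genuinely \emph{not} computed on underlying complexes---the coproduct of coaugmented conilpotent coalgebras is $k \oplus \widetilde{C} \oplus \widetilde{D}$ rather than $C \oplus D$---so one cannot argue naively on underlying chain complexes, but must route everything through the coideal functor $\widetilde{(-)}$ and exploit its conservativity. Once colimit preservation and local presentability are in hand the adjoint functor theorem finishes the proof; alternatively one could exhibit the right adjoint explicitly as the coalgebra nerve $C \mapsto \bigl(\Hom_{\dgco}(\Ch(r\Delta^{\bullet}), C)\bigr)$, where $r\Delta^{n}$ denotes the reduction of $\Delta^{n}$, and verify the adjunction directly.
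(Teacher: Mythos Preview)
Your argument is correct and in fact more complete than the paper's. The paper's own proof is a single sentence: it simply invokes Lemma~\ref{lem-chains-equivalence} for preservation of weak equivalences and observes that cofibrations are monomorphisms in both model structures, so they are preserved. That is exactly your first paragraph. The paper says nothing about $\Ch$ being a left adjoint.

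Where you diverge is in supplying the adjoint property via local presentability and colimit preservation. This is a genuine addition: the standard definition of ``left Quillen'' requires a left adjoint, and the paper simply elides this. Your route through the coaugmentation coideal functor $\widetilde{(-)}$ is sound---it is conservative and, being left adjoint to $T^{c}$, colimit-preserving, so it detects colimits---and the identification of $\widetilde{\Ch}$ with reduced normalized chains, together with the closure of reduced simplicial sets under pointed colimits, finishes the job. One small point: when you write $\widetilde{C} = \ker(\epsilon_{C})$ you are using that $k$ is a field (so the coaugmentation splits the counit); this is fine under the hypotheses.

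What each approach buys: the paper's two-line argument is all that is actually needed downstream, since the only use of the lemma is to conclude that $\Ch$ preserves homotopy pushouts, and for that it suffices that $\Ch$ preserves cofibrations and weak equivalences between cofibrant objects (and every object in both categories is cofibrant). Your longer argument establishes the lemma as stated, in the strict sense of ``left Quillen'', and would be necessary if one wanted to invoke the full Quillen adjunction machinery elsewhere.
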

\begin{proof}
First we note that $\Ch$ has a right adjoint. It is provided by $C \mapsto \Hom_{\mathbf{dgCoa}}(\Ch(\Delta^{\bullet}), C)$ where $\Delta^{\bullet}$ is the cosimplicial simplicial set given by the $n$-simplex in degree $n$.

The fact that the adjunction is Quillen follows from Lemma \ref{lem-chains-equivalence} together with the observation that $\Ch$ preserves cofibrations, which are just monomorphisms in both categories. 
\end{proof}
\begin{remark}
	The reason for assuming that $k$ be a field in \ref{lem-chains-equivalence} and \ref{lemma-cleftquillen}	is that the category of dg coalgebras is only known to have a closed model category structure (with filtered quasi-isomorphisms as weak equivalences) under this assumption. Consequently, it is also needed for establishing dg Koszul duality as a Quillen equivalence between $\mathbf{dgA}_{/k}$ and $\dgco$ in \cite{Positselski11}.  This result should generalize to more general commutative rings, but there are technical difficulties in implementing it. We will establish Koszul duality as an equivalence of relative categories; this suffices for our purposes.
\end{remark}	
\begin{lemma}\label{lem:acyclic}
	Let $X$ be a complex of free $k$-modules such that for any field $F$ and a map $k\to F$ the complex $X\otimes_kF$ is acyclic. Then $X$ is acyclic to begin with.
\end{lemma}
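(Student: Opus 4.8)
The plan is to reduce the statement to a computation of the homology modules $H_n(X)$ via the universal coefficient theorem. First I would record that since $k$ is a PID and each $X_n$ is free, every submodule of $X_n$ is again free; in particular the cycle modules $Z_n$ and boundary modules $B_n$ are free. Thus $X$ is a complex of free modules whose cycles and boundaries are free, which is exactly the hypothesis needed for the universal coefficient theorem (with no boundedness assumption): for every $k$-module $M$ and every $n$ there is a short exact sequence
\[
0 \to H_n(X)\otimes_k M \to H_n(X\otimes_k M) \to \Tor_1^k(H_{n-1}(X), M) \to 0,
\]
obtained from the termwise-split sequences $0\to Z_n\to X_n\to B_{n-1}\to 0$ and the length-one free resolutions $0\to B_n\to Z_n\to H_n(X)\to 0$.

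Next I would specialize to $M=F$, a field equipped with a map $k\to F$. Since $X\otimes_k F$ is acyclic by hypothesis, the middle term vanishes, and the sequence forces
\[
H_n(X)\otimes_k F = 0 \quad\text{and}\quad \Tor_1^k(H_n(X), F) = 0
\]
for all $n$ and all such $F$. Fixing $n$ and writing $H = H_n(X)$, it remains to deduce $H=0$ from these two vanishing conditions.

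Here is where the only real subtlety lies, and I expect it to be the main obstacle. The vanishing $H\otimes_k F=0$ alone does \emph{not} suffice: for $k=\Z$ the module $H=\Q/\Z$ satisfies $H\otimes_\Z F=0$ for every field $F$, yet is nonzero. One must genuinely use the $\Tor$ term—that is, use that the entire complex, not just a single homology group, has acyclic reductions. Concretely I would run two families of coefficient fields, using that the primes of a PID are $(0)$ and the $(p)$ for irreducibles $p$. Taking $F=\mathrm{Frac}(k)$, the condition $H\otimes_k \mathrm{Frac}(k)=0$ says $H$ is a torsion module. Taking $F=k/(p)$ and using the resolution $0\to k\xrightarrow{\,p\,} k\to k/(p)\to 0$ identifies $\Tor_1^k(H,k/(p))$ with the $p$-torsion submodule $H[p]$, so the condition says $H[p]=0$ for every irreducible $p$.

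Finally I would close with a short elementary argument: a torsion module over a PID with no $p$-torsion for any prime $p$ is zero. Indeed, any nonzero $x\in H$ has annihilator $(a)$ with $a$ a nonunit; choosing an irreducible factor $p\mid a$, say $a=pb$, the element $bx$ is nonzero (since $b\notin(a)$) but is killed by $p$, contradicting $H[p]=0$. Hence $H_n(X)=0$ for every $n$, so $X$ is acyclic. The conceptual heart of the proof is thus the recognition that naive tensor-vanishing is insufficient and that the $\Tor$ contribution supplied by the universal coefficient theorem is precisely what excludes the $\Q/\Z$-type counterexamples.
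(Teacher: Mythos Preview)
Your argument is correct, and it takes a genuinely different route from the paper's. The paper first reduces to the case where $k$ is a local PID (a DVR with uniformizer $x$) by localizing homology at maximal ideals, and then invokes a homotopy pullback square of arithmetic-fracture type relating $X$, its Bousfield localization $\hat X_{(x)}$ with respect to $-\otimes k/(x)$, and $X\otimes k[x^{-1}]$; acyclicity of the three outer corners forces acyclicity of $X$. By contrast you stay entirely within elementary homological algebra: the universal coefficient theorem over a PID gives vanishing of both $H_n(X)\otimes F$ and $\Tor_1^k(H_n(X),F)$, and you then use $F=\operatorname{Frac}(k)$ and $F=k/(p)$ to conclude $H_n(X)$ is torsion with no $p$-torsion, hence zero. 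Your approach is shorter and avoids the machinery of Bousfield localization and derived completion; the paper's argument, on the other hand, makes the role of the ``two kinds of residue fields'' of a DVR geometrically transparent via the fracture square and would adapt more readily to settings where a UCT in the classical form is not available. Your observation that the $\Tor$-term is essential (ruling out $\Q/\Z$-type counterexamples) is exactly the algebraic shadow of the paper's need for \emph{both} corners $\hat X_{(x)}$ and $X\otimes k[x^{-1}]$ in the pullback square.
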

\begin{proof}
It is well-known that a $k$-module is zero if and only if its localization at every maximal ideal of $k$ is zero; together with the exactness of the localization functor for modules over a commutative ring this implies that it suffices to assume that $k$ is local. Let its unique maximal ideal be generated by $x\in k$. Then we have the following homotopy pullback square, cf. \cite[Proposition 4.13]{DwG02}:
\[
\begin{CD}
X     @> >>  \hat{X}_{(x)}\\
@VVV        @VV V\\
X\otimes k[x^{-1}]    @> >> \hat{X}_{(x)}\otimes k[x^{-1}]
\end{CD}
\]
Here $X\to \hat{X}_{(x)}$ is the Bousfield localization of $X$ with respect to the functor $-\otimes k/(x)$ (it agrees with the completion of $X$ at the ideal $(x)\in k$).
Since $k/(x)$ and $k[x^{-1}]$ are both fields, we have that $X\otimes k[x^{-1}]$ and $\hat{X}_{(x)}$, and thus also $\hat{X}_{(x)}\otimes k[x^{-1}]$,  are acyclic and then so is $X$.
\end{proof}	
\begin{proposition}\label{prop-barcobar-z}
The relative categories $(\dga, W_{A})$ and $(\dgco, W_{C})$ are weakly equivalent; here $W_{A}$ denotes quasi-isomorphisms and $W_{C}$ weak equivalences of dg coalgebras.
\end{proposition}
\begin{proof}
We will prove that for any augmented dg algebra $A$ there is a quasi-isomorphism $\Om\Ba (A)\to A$ and for any conilpotent dg coalgebra $\Ch$ the natural map $C \to \Ba\Om (C)$ is a weak equivalence. If $k$ is a field this follows immediately from the results recalled in Section \ref{sect-barcobar}.

 Let $F$ be a field supplied with a map $k\to F$. Then by construction $\Ba \Om(A)\otimes F = \Ba \Om(A \otimes F)$. Thus $\Ba \Om(A \otimes F) \simeq A \otimes F$ implies $\Ba \Om(A)\otimes F \simeq A \otimes F$. But it follows from Lemma \ref{lem:acyclic} that two complexes of free $k$-modules are quasi-isomorphic if they are quasi-isomorphic after tensoring with any field; thus $\Om\Ba (A)\to A$ is a quasi-isomorphism.

The statement for dg coalgebras follows by applying the same argument to the graded pieces of the natural filtrations on $C$ and $\Ba \Om (C)$, see the proof of Theorem 6.10 in \cite{Positselski11}.

This shows that $\Om \Ba$ and $\Ba \Om$ are strictly homotopic to the identity functor on $\dga$ and $\dgco$ respectively in the sense of \cite{Barwick12}. So the two relative categories are strictly homotopy equivalent, and thus weakly equivalent by Proposition 7.5 (iii) in  \cite{Barwick12}.
\end{proof}

In the following formulation we denote by $W$, slighty abusing the notation, a submonoid of $M$, the corresponding subset of 1-simplices in $\Ne(M)$, and the corresponding subset of the canonical basis of $\Ch (M)$.

\begin{theorem}\label{thm-barandloop} 
Let $W \subset M$ be a submonoid.
Then there is a natural zig-zag of weak equivalences of  dg coalgebras $\Ch \Lo_{W}\Ne (M) \simeq \Ba\Lo_{W} \Ch (M)$.
\end{theorem}
\begin{proof}
By definition the localization constructions in dg algebras and simplicial sets are given by homotopy colimits, see Sections \ref{sect-alglocal} and \ref{sect-catlocal}.
As $\Ba$ is an equivalence of relative categories by Proposition \ref{prop-barcobar-z} it preserves homotopy colimits and we deduce 
$\Lo'_{W}\Ba \Ch (M) \simeq \Ba \Lo_W \Ch (M)$ where $$\Lo'_{W}\Ba \Ch (M) = \Ba \Ch (M) \coprod^{h}_{\amalg_{W} \Ba (k\langle t\rangle)}\amalg_{W}\Ba (k\langle t, t^{-1}\rangle)$$ where $\coprod^h$ stands for the homotopy pushout of dg coalgebras.

There is also a natural map $\eta: \Lo''_{W}\Ch \Ne (M)\to\Ch \Lo_{W}\Ne (M) $  where 
$$\Lo''_{W}\Ch \Ne (M) = \Ch \Ne (M) \coprod^{h}_{\amalg_{W}\Ch (I)}\amalg_{W}\Ch (J)$$
and $I,J$ are as in Section \ref{sect-catlocal}. We note first that $\eta$ is a weak equivalence if $k$ is a field since in that case $C$ is a left Quillen functor by Lemma \ref{lemma-cleftquillen} and so, it commutes with homotopy colimits. As the tensor product commutes with the homotopy colimit it follows that $\eta$ becomes a quasi-isomorphism after tensoring with an arbitrary field. Thus by Lemma \ref{lem:acyclic}  it is a weak equivalence in general.

It remains to identify the two different coalgebra localizations. We apply the isomorphic functors $\Ch \Ne $ and $\Ba \Ch $ to the map of discrete monoids $\set N \to \set Z$ to show that $\Ch (I) \to \Ch (J)$ is weakly equivalent to $\Ba (k\langle t \rangle) \to \Ba (k\langle t, t^{-1}\rangle)$.
\end{proof}

\section{Applications}
\subsection{The generalized correspondence of cobar and loop construction}
The second key ingredient for our applications is the following result of Fiedorowicz:
\begin{proposition}\label{prop-fiedorowicz}
There is a functor $\Mo: \sSet^{0} \to \Mon$ satisfying $K \simeq_{Q} \Ne \Mo(K)$.
\end{proposition}
\begin{proof}
By \cite[Theorem 3.5]{Fiedorowicz84} there is a a functor $\operatorname{D}$ from based path connected topological spaces to discrete monoids such that $X$ is weakly equivalent to the classifying space of $\operatorname{D}(X)$.
Then $\Mo(K):=\operatorname{D}(|K|)$.
\end{proof}

Applying Theorem \ref{thm-barandloop} in the case that $W = \Mo$ allows us to prove the following theorem that was proved for topological spaces in \cite{Zeinalia16}. It is a generalization of a classical result by Adams \cite{Adams56}.

To state the result we recall that the simplicial loop group $\G$ and the simplicial classifying space  $\overline W$ (constructed e.g.\ in \cite[Chapter V]{Goerss99}) give a Quillen equivalence between reduced simplicial sets and simplicial groups.

\begin{corollary}\label{cor-loopcobar}
Let $K$ be a grouplike reduced simplicial set. Then there is a natural zig-zag of quasi-isomorphism of dg algebras $\Ch \G  (K) \simeq \Omega \Ch (K)$.
\end{corollary}
\begin{proof} 
We denote a functorial fibrant replacement in the classical model structure by $R_{Q}$ and in the Joyal model structure by $R_{J}$. Then we note that $R_{J}K$ is weakly Kan and grouplike, thus it is a Kan fibrant replacement for $K$.  
By Proposition \ref{prop-fiedorowicz} we have $K \simeq_{Q} \Ne \Mo(K)$ and $R_{Q}K \simeq_{J} R_{Q}\Ne \Mo(K)$ as $\sSet$ is a Bousfield localisation of $\qCat$.
As $\Lo_{K_{1}}K \simeq_{J} K$ by assumption and $R_{J}\Lo_{K_{1}}$ is a Kan replacement (see Section \ref{sect-grouplike})
we obtain 
\[K \simeq_{J} R_{J}K \simeq_{J} R_{Q}K \simeq_{J} R_{J}\Lo_{\Mo(K)}\Ne \Mo(K) \simeq_{J} \Lo_{\Mo(K)}\Ne \Mo(K) .\] 

Thus $\Ch (K) \simeq \Ch \Lo_{\Mo(K)}\Ne \Mo(K) \simeq \Ba\Lo_{\Mo(K)}\Ch \Mo(K)$ by Theorem \ref{thm-barandloop} and $\Omega \Ch (K) \simeq \Lo_{\Mo(K)}\Ch \Mo(K)$ by Proposition \ref{prop-barcobar-z}.
Next $\Lo_{\Mo(K)}\Ch \Mo(K) \simeq \Ch \Lo_{\Mo(K)}\Mo(K)$ by \cite[Theorem 10.1]{BCL18}.
The classifying space of $\Lo_{\Mo(K)}\Mo(K)$ is weakly equivalent to $\Ne\Mo(K)$ by \cite[Theorem 5.5(ii)]{Dwyer80}.
As $\Lo_{\Mo(K)}\Mo(K)$ is a simplicial group (cf. \cite[Theorem 5.5(i)]{Dwyer80}), its classifying space may be computed by $\overline W$ \cite[Section V.4]{Goerss99}, so by the equivalence discussed above $\Lo_{\Mo(K)}\Mo(K)$ is of the form $\G\Ne\Mo(K)$, which is  weakly equivalent to $\G K$ by Proposition \ref{prop-fiedorowicz}. The result follows by applying $\Ch$.
\end{proof}

To go beyond grouplike simplicial sets we need to refine the loop group construction. The following almost trivial example is instructive.

\begin{example}
Consider the simplicial set $K$ with one $0$-simplex and one non-degenerate $1$-simplex. Topologically, $K$ is the circle, and so its loop space is the infinite cyclic group and the dg algebra $\Ch\G (K)$ is (quasi-isomorphic to) the ring of Laurent polynomials $k[t,t^{-1}]$ with $|t|=0$.

On the other hand, $\Om \Ch (K)\cong k[t]\neq k[t,t^{-1}]$. The reason for this discrepancy is that $K$ is \emph{not} grouplike.  
\end{example}
This example suggests that, even in the case when a simplicial set $K$ is not grouplike, the chains on its loop space could still be recovered as a localization of $\Om \Ch K$. This is indeed true:

\begin{corollary}\label{cor-hesstonks}
For any reduced simplicial set $K$ there is a weak equivalence $\Ch \G  (K) \simeq \Lo_{1+K_{1}}\Omega \Ch (K)$. Here the localization on the right hand side is performed at the set of cycles $\{1+s^{-1} x\}_{x \in K_{1}}$, where $s^{-1}$ denotes desuspension.
\end{corollary}
\begin{proof}
First we will show that $\Lo_{1+K_{1}}\Om \Ch (K) \simeq \Om \Ch \Lo _{K_{1}}(K)$ by commuting localization past $\Om$ and $\Ch$.

Since $\Om$ is an equivalence of relative categories it commutes with colimits. As in the proof of Theorem \ref{thm-barandloop} we may express the localization of a coalgebra as a homotopy pushout along $\amalg \ \Ch (I) \to \amalg \ \Ch (J)$ or equivalently along $\amalg \ \Ba (k\langle t \rangle) \to \amalg \ \Ba (k\langle t, t^{-1}\rangle)$. Again from the proof of Theorem \ref{thm-barandloop} we know that this localization commutes with $\Ch$.
Thus we have $\Om \Ch \Lo_{K_{1}}K \simeq \Om L_{K_{1}}\Ch(K) \simeq L_{1+K_{1}}\Om \Ch(K)$. Here for the last step we use that $\Om \Ba (k\langle t, t^{-1}\rangle) \simeq k\langle t, t^{-1}\rangle$. The equivalence from $\Ch(I)$ to $\Ba (k\langle t \rangle)$ sends an element $x \in K_{1}$ to $s^{-1}x-1$ in $\Om \Ch(K)$, cf. the correspondence in Lemma \ref{lemma-discretecommute}. Then $s^{-1}x-1$ is sent to $x$ by the natural transformation from $\Om B$ to the identity. Thus localizing $K$ at $K_{1}$ corresponds to localising $\Om \Ch K$ at $1 + K_{1}$. 

For the left hand side we note that $\Ch \G  (K) \simeq \Ch \G  \Lo_{K_{1}}(K)$ since $G$ preserves the (classical) weak equivalence between $K$ and $\Lo_{K_{1}}(K)$, and thus we deduce the result from Corollary \ref{cor-loopcobar} applied to $\Lo_{K_{1}}(K)$.
\end{proof}
\begin{remark}
This result throws some light on a construction of Hess and Tonks \cite{Hess10a}.
For a simplicial set $K$ that is not necessarily grouplike they consider an \emph{extended cobar construction} $\hat \Omega \Ch (K)$, see \cite[Section 1.2]{Hess10a}, and then show that $\Ch \G (K) \simeq \hat \Om \Ch (K)$ (in fact, they construct an explicit chain equivalence between these dg algebras).

Unravelling the extended cobar construction in the special case of a chain coalgebra we see that $\hat \Om \Ch (K)$ may be constructed as the dg algebra obtained from $\Om \Ch (K)$ by adding inverses for all the cycles $1+s^{-1}x$ for $x \in K_{1}$. As $\Om \Ch (K)$ is cofibrant over its subalgebra generated by these cycles, this is a derived localization, see \cite[Remark 3.11]{BCL18}.
Therefore we obtain that $\Ch \G  (K) \simeq \Lo_{1+K_{1}}\Omega \Ch (K) \simeq \hat \Om \Ch(K)$ by Corollary \ref{cor-hesstonks}, recovering the result of \cite{Hess10a}.

The construction of $\hat \Omega C$ for a dg coalgebra $C$ depends on the choice of a basis for $C_{1}$ and \cite{Hess10a} does not address the question whether different choices lead to quasi-isomorphic dg algebras. For $C=\Ch(K)$ there is a natural basis in $\Ch_1(K)$ given by $1$-simplices and with this basis the quasi-isomorphism $\Ch \G  (K) \simeq  \hat \Om \Ch(K)$ does hold. The following example shows that it will not hold with a wrong choice of basis.
\begin{example}
	Consider a monoid $M$ with two elements $1$ and $b$ where $1$ is the identity element and $b^2=b$. It is clear that $\Ne M$ is contractible and so $\Lo_{1+K_{1}}\Omega\Ch\Ne(M)\simeq \Ch(M)[b^{-1}]\cong k$. On the other hand, choosing the basis  in $(\Ch\Ne M)_1$ given by the \emph{negatives} of $1$-simplices in $\Ne M$ leads to the extended cobar construction $\hat \Om \Ch\Ne (M)$ that is quasi-isomorphic to $\Ch(M)[1+(1-b)]^{-1}\cong \Ch(M)[2-b]^{-1}$. It is easy to compute that $\Ch(M)[2-b]^{-1}\cong k[\frac{1}{2}]\times k$ and this is not isomorphic to $k$ unless $k$ has characteristic 2.
\end{example}
\end{remark}

\subsection{Chain coalgebras detect weak homotopy equivalences}
Next, we deduce the main result of  \cite{Rivera18} as follows:
\begin{corollary}\label{cor-detectwes}
Let $k = \set Z$. A map of reduced fibrant simplicial sets $f: K \to K' $ is a weak equivalence if and only if $f_{*}: \Ch (K)\to \Ch (K')$ is a weak equivalence of dg coalgebras.
\end{corollary}
\begin{proof}
The ``only if'' follows from Lemma \ref{lem-chains-equivalence} and Lemma \ref{lem:acyclic}.

To show the converse we assume that $f_{*}: \Ch (K) \simeq \Ch (K')$.
By Corollary \ref{cor-loopcobar} this implies that we have a quasi-isomorphism $\Ch \G (f): \Ch \G  (K) \simeq \Ch \G  (K')$.
Thus  $H_{0}(\Ch \G (f))$ is bijective. By construction it is a morphism of Hopf algebras, compatible with both the composition of loops and the coproduct.
Together this shows that $H_{0}(\Ch \G (f))$ induces an isomorphism between grouplike elements in $H_{0}(\G K)$ and $H_{0}(\G K')$, i.e.\ between the fundamental groups of $|K|$ and $|K'|$. 

We finish the proof by applying Whitehead's theorem. The identity components of $\G K$ and $\G K'$ are connected nilpotent spaces, thus by \cite{Dror71} they are weakly equivalent. As all components are equivalent and $f$ identifies the $\pi_{0}(\G K)$ and $\pi_{0}(\G K')$ we obtain a weak homotopy equivalence and thus a weak equivalence of simplicial monoids $\G K \to \G K'$. This implies $K \simeq_{Q} K'$.
\end{proof}

\subsection{Derived categories}
For the last part of this section we assume that $k$ is a field. We recall the derived categories of second kind constructed in \cite{Positselski11}. Specifically, for the coalgebra $\Ch (K)$ we consider the coderived category $\Dco(\Ch (K))$, which is a triangulated category obtained as the localization of the homotopy category of dg comodules over $\Ch (K)$ at morphisms with coacyclic cone. A dg comodule is coacyclic if it is contained in the minimal triangulated subcategory that contains the total complexes of short exact sequences and is closed under infinite direct sums.

A fundamental result says that for any conilpotent coalgebra $C$ there is an equivalence $\Dco(C) \simeq \De(\Om C)$, cf. \cite[Theorem 6.5(a)]{Positselski11}.
Thus weakly equivalent dg coalgebras have equivalent coderived categories. 

It follows directly from Lemma \ref{lem-chains-equivalence}  that the coderived category of the chain coalgebra of a simplicial set is an invariant with respect to Joyal weak equivalences. On the other hand, there is another homotopy invariant, this time with respect to classical (Quillen) weak equivalences of simplicial sets. It is the triangulated category of $\infty$-local systems on a simplicial set $K$. This could be defined e.g. as the derived category of cohomologically locally constant sheaves on  $|K|$, cf. \cite{Holstein1, HolsteinG}.

\begin{corollary}\label{cor-derivedcats}
The derived category of $\infty$-local systems on $K$ is a full subcategory of $\Dco(\Ch K)$. If $K$ is grouplike the two categories are equivalent.
\end{corollary}
\begin{proof}
By  \cite[Theorem 6.5(a)]{Positselski11} $\Dco(\Ch (K)) \simeq \De(\Om \Ch (K))$.
On the other hand, the derived category of $\infty$-local systems is $\De(\Ch \G (K))$, by the second part of Theorem 26 in \cite{Holstein1}.

If $K$ is grouplike, the two categories agree by Corollary \ref{cor-loopcobar}. Otherwise we have $\De(\Ch \G (K)) \simeq \De(\Lo_{1+K_{1}}\Om \Ch (K))$ by Corollary \ref{cor-hesstonks}, so $\infty$-local systems are modules over a localization of $\Om \Ch (K)$. But by Corollary 4.29 in \cite{BCL18} 
the derived category of modules over a localized dg algebra is a full subcategory of the derived category of modules over the original dg algebra. Explicitly, $\infty$-local systems are equivalent to the full subcategory of $K_{1}$-local objects in $\De(\Om \Ch (K))$.
\end{proof}

\section{An algebraic model for the homotopy category of spaces}
Finally, our results give us an algebraic model for the homotopy theory of connected topological spaces (equivalently, reduced simplicial sets). In this section we fix $k = \set Z$.

We consider the relative category $(\mathbf{Mon}, W)$ where $\mathbf{Mon}$ is the category of discrete monoids and $f: M \to N$ is in $W$ if and only if it induces a quasi-isomorphism of the derived localizations of the monoid algebras, i.e.\ if $\Lo_{M}\Ch (M) \simeq \Lo_{N}\Ch (N)$. 

This definition is completely algebraic in the sense that a monoid is an algebraic structure i.e. a set with a collection of finitary operations subject to finitely many identities \cite{Cohn81} and the notion of a weak equivalence in $\mathbf{Mon}$ is also described algebraically. 
The definition is meaningful because of the following:
\begin{corollary}\label{cor-monoidwe}
Two discrete monoids $B$ and $B'$ are weakly equivalent if and only if $\Ne (B) \simeq_{Q} \Ne (B')$.
\end{corollary}
\begin{proof}
By Theorem \ref{thm-barandloop} we know that $\Ba \Lo_{B}\Ch(B) \simeq \Ch \Lo_{B}\Ne(B)$ for any monoid, and together with Proposition \ref{prop-barcobar-z} this gives $\Lo_{B}\Ch (B) \simeq \Om \Ch \Lo_{B}\Ne (B)$.
This shows immediately that $\Ne (B) \simeq_{Q} \Ne (B')$ implies $B \simeq B'$.
Moreover $\Ne $ preserves weak equivalence as $\Om \Ch$ reflects weak equivalences by Corollary \ref{cor-detectwes}.
\end{proof}

\begin{theorem}\label{thm-main}
The nerve functor provides an equivalence of relative categories $\Ne: (\mathbf{Mon}, W) \to (\sSet^{0}, W_{Q})$.
\end{theorem}
\begin{proof}
$N$ preserves weak equivalences by Corollary \ref{cor-monoidwe}. 
Using the functor $\Mo$ from \ref{prop-fiedorowicz} we have $K \simeq \Ne \Mo(K)$.

Moreover for any monoid $B$ to show $B \simeq \Mo \Ne(B)$ it suffices to show that $\Ne (B) \simeq \Ne \Mo \Ne(B)$, which follows immediately from the above.

This shows that $(\mathbf{Mon}, W)$ and $(\sSet^{0}, W_{Q})$ are homotopy equivalent and  thus weakly equivalent, cf.\ the proof of Proposition \ref{prop-barcobar-z}.
\end{proof}

\bibliography{./biblibrary2}

\end{document}